%
%
%
%
\documentclass{amsart}
\usepackage{color}

\newtheorem{theorem}{Theorem}[section]
\newtheorem{lemma}[theorem]{Lemma}

\theoremstyle{definition}

\theoremstyle{remark}
\newtheorem{remark}[theorem]{Remark}
\numberwithin{equation}{section}



\begin{document}

\title[Optimal bounds for self-similar solutions to coagulation equations]{Optimal bounds for self-similar solutions to coagulation equations with
 product kernel}

\author{B. Niethammer}
\address{Mathematical Institute, University of Oxford, 24-29 St.
Giles, Oxford, OX1 3LB, England }
\email{niethammer@maths.ox.ac.uk}
\thanks{This work was supported by the EPSRC Science and Innovation award to the
Oxford Centre for Nonlinear PDE (EP/E035027/1)
and through the DGES Grant MTM2007-61755. The authors also gratefully acknowledge the hospitality
of the Isaac Newton Institute for Mathematical Sciences where part of this work was done during
the program on PDE in Kinetic Theories.
}

\author{J.J.L. Vel\'azquez}
\address{
ICMAT (CSIC-UAM-UC3M-UCM),
C/ Nicol\'as Cabrera 15,
28049 Madrid, Spain}
\email{jj$\mbox{}_{-}$velazquez@icmat.es}

\subjclass[2000]{45K05;  82C05}


\keywords{Smoluchowski's coagulation equations,  product kernel, self-similar solutions}

\begin{abstract}
We consider mass-conserving self-similar solutions of Smoluchows-\linebreak ki's coagulation
equation with  product kernel of homogeneity $2\lambda \in (0,1)$.
We establish rigorously that such solutions
exhibit a singular behavior of the form $x^{-(1+2\lambda)}$ as $x \to 0$. This property
had been conjectured, but only weaker results had been available up to now.
\end{abstract}

\maketitle

\newcommand{\nwc}{\newcommand}

\newcommand{\av}{ -\! \! \! \! \! \!  \!\int}
\newcommand{\tav}{ -\! \!  \! \!  \!\int}

\newcommand {\Matrix}[4]
 {  \left( \begin{array}{cc} #1 & #2\\
                              #3 & #4 \end{array} \right)  }

\newcommand {\Fall}[5]
    { #1 = \left \{
           \begin{array}
           {c@ {\quad : \quad} l}
            #2 & #3 \\
            #4 & #5           \end{array}
           \right .    }

\newcommand {\Falldrei}[7]
    { #1 = \left \{
           \begin{array}
           {c@ {\quad : \quad} l}
           #2 & #3 \\
            #4 & #5 \\
            #6 & #7      \end{array}
           \right .    }

\nwc{\R}{\mathbb R}
\nwc{\Z}{\mathbb Z}
\nwc{\N}{\mathbb N}

\newcommand{\ignore}[1]{}

\nwc{\eps}{\varepsilon}
\nwc{\re}{Re\,}

\nwc{\wto}{\rightharpoonup}
\newcommand{\normal}{\cdot \vec n}

\renewcommand{\AE}{{a.e.\ }}
\nwc{\ds}{\displaystyle}
\newcommand {\bedis} {\begin{displaymath}}
\newcommand {\edis} {\end{displaymath}}
\newcommand{\newbeqna} {\renewcommand {\arraystretch} {2}
                        \begin {displaymath} \begin {array}{crcl}}
\newcommand{\neweqna}{\end{array} \end {displaymath}}

\newcommand{\fbeqna}{\renewcommand {\arraystretch} {1.3}
\begin {displaymath}\begin{array}{rcll}}
\newcommand{\feqna}{\end{array}\end{displaymath}}

\newcommand {\beqna} {\begin{eqnarray*}}
\newcommand {\eqna} {\end{eqnarray*}}
\newcommand {\beqn} {\begin{eqnarray}}
\newcommand {\eqn} {\end{eqnarray}}
\newcommand{\x}{X}
\newcommand{\y}{Y}
\newcommand{\z}{Z}
\newcommand{\uu}{U}
\newcommand{\vv}{V}
\newcommand{\h}{H}
\newcommand{\skp}[2]{\langle #1 \, , #2 \rangle}

\newcommand{\xmy}{x{-}y}
\newcommand{\B}{B}
\newcommand{\p}{\varphi}


\section{Introduction}
\label{S.intro}

Smoluchowski's coagulation equation describes
the irreversible aggregation of clusters by binary collisions in a mean-field approximation.
In the following we denote the number density of clusters of size $\xi$ 
at time $t$ by $f(t,\xi)$.
Clusters of size $\xi$ and $\eta$ can  coalesce  to clusters
of size $\xi+\eta$ at a rate given by a rate kernel $K(\xi,\eta)$.
Then the
 dynamics of $f$ are given by
\begin{equation}\label{coag1}
\frac{\partial}{\partial t} f (\xi,t) = \tfrac 1 2 \int_0^\xi \,d\eta\, K(\xi-\eta,\eta)
f(\eta,t) f(\xi{-}\eta,t)
\,-
\,  f(\xi,t) \int_0^{\infty}\,d\eta\, K(\xi,\eta) f(\eta,t)\,.
\end{equation}
In this article we are particularly interested in self-similarity in Smoluchowski's coagulation equation and
thus we   consider  homogeneous kernels. More precisely, we assume 
that $K \in C^1(\R_+^2)$, $K \geq 0$, $K$ is symmetric and is homogeneous of degree $2\lambda \in (0,1)$,
that is
\begin{equation}
\label{kernel0a}
K(ax,ay) = a^{2\lambda} K(x,y)  \qquad \mbox{ for all } x,y \in \R_+\, \mbox{ and some } \lambda \in (0,1/2)\,.
\end{equation}
Next, we assume that the probabilities for coalescence between particles have a certain power law growth
in the sizes of particles. That is, we
 assume that there exists a positive constant $K_0$  such that
\begin{equation}\label{kernel0b}
\begin{split}
K(x,y) &\leq K_0\big( x^{\alpha} y^{\beta} + x^{\beta}y^{\alpha}\big) \qquad \mbox{ for all } x,y \in \R_+^2
\\
0 & <\alpha \leq \beta <1/2, \qquad \alpha + \beta =2\lambda\,
\,.
\end{split}
\end{equation}
We also need a  non-degeneracy assumption that says that a certain number of coalescence of
particles of comparable size take place. We assume 
that there exists a positive constant $k_0$ such that
\begin{equation}\label{kernel0c}
\min_{[1/4,1]\times [1/4,1]} K(x,y) \geq k_0\,.
\end{equation}
The number $1/4$ could be replaced by any number $a \in (0,1)$.

Kernels of this type are denoted as kernels of {\it Class I} in the review paper  \cite{Le1}.
In particular, the so-called  product kernel
\begin{equation}\label{kernel}
K(\xi,\eta) = \xi^{\alpha} \eta^{\beta} + \xi^{\beta} \eta^{\alpha}
\end{equation}
with $0<\alpha \leq \beta$ 
satisfies all the assumptions \eqref{kernel0a}-\eqref{kernel0c}.

It is well-known \cite{LM1} that for the homogeneity $2\lambda \in (0,1)$  the initial
value problem \eqref{coag1} for  data with finite mass is well-posed and 
 the  mass  $ \int_0^{\infty} \xi f(\xi,t) \,d\xi$
is conserved for all times. It has been conjectured for homogeneous kernels that 
solutions of \eqref{coag1} exhibit self-similar form  for large times. However,
only for special kernels such as $K=1$ or $K=x+y$, this hypothesis could be verified.
These kernels have explicit fast decaying self-similar solutions and recently also new
families
of  self-similar solutions  have been discovered \cite{Bertoin1,MP1} that
have algebraic decay and infinite mass.
Furthermore, their domain of attraction 
 under weak convergence
has been completely characterized  \cite{MP1}. 

However, self-similarity is still only poorly understood for non-solvable kernels such
as the ones in \eqref{kernel0a}-\eqref{kernel0c}. In fact, not much is known about
 the structure of self-similar solutions themselves. Physicists \cite{Le1,DE1}
 have derived asymptotics
for small and large clusters under the assumption that a fast decaying sufficiently regular
solution exists.  A rigorous proof of 
 existence of fast decaying
mass-conserving self-similar solutions
for a class of homogeneous
kernels has however only recently been established \cite{EMR,FL1}. As far as we are aware, nothing
is known about self-similar solutions with algebraic decay or  the uniqueness
of mass-conserving self-similar solutions. 
As a further step towards a better understanding of the latter, some effort
has been undertaken to obtain more qualitative information about the self-similar
solutions obtained in \cite{EMR,FL1}. Certain regularity properties and 
estimates
on their precise decay at infinity and their behaviour for small
clusters have been derived in \cite{CM, EM, FL1,FL2}.
It turns out that these results are optimal for the so-called sum kernel, 
that is $K$ as in \eqref{kernel}
 with $\alpha = 0$,  
but they are  only suboptimal for the  product kernel, that is
the case $\alpha>0$. More precisely, in the case $\alpha=0$ self-similar solutions exhibit
a singular power-law behavior of the form $x^{-\tau}$ for some $\tau <1+2\lambda$ that is
determined in a nonlocal way by the $2\lambda$-th moment of the solution itself. For the case
$\alpha>0$  the predicted power-law is $x^{-(1+2\lambda)}$ and thus completely different.
Our contribution in this paper is to establish rigorously the expected singular power-law behavior
for self-similar solutions for kernels satisfying \eqref{kernel0a}-\eqref{kernel0c} in the case $\alpha>0$.
Our method has the advantage of being completely elementary.

 From the
physical point of view $\alpha >0$ means that a given particle is more
likely to interact with particles having comparable sizes than with smaller
ones. On the contrary, in the case $\alpha =0,$ a given particle has similar
probability of interacting with small particles and with comparable ones.
Our results in this paper confirm that in the case $\alpha>0$ the distribution of small 
particles (in self-similar variables) is basically determined by the collisions
with comparable particles, while the analysis in \cite{CM,FL2} for the case $\alpha=0$ shows that
the distribution for small particles is mostly due to the collisions with larger particles.

In order to describe our results in more detail we first derive the equation that is
satisfied by mass-conserving self-similar solutions of \eqref{coag1}. Such solutions are of
the form 
\begin{equation}\label{coag2}
f(\xi,t) = \frac{1}{s^2(t)} g\big( \frac{\xi}{s(t)} \big)
\end{equation}
with an increasing function $s(t)$. 
Using the ansatz \eqref{coag2} in \eqref{coag1} and setting $\xi/s=x$ and $\eta/s=y$
we find that 
 $s$ must satisfy $s' = w s^{2 \lambda}$ for some constant $w>0$
 and $g$ must solve the equation 
\begin{equation}\label{coag4}
w \big (-2 g(x) - x g'(x)\big) = \int_0^{x/2} \,dy\, K(\xmy,y) g(y) g(\xmy) 
\,-\, g(x) \int_0^{\infty} \,dy\,K(x,y) g(y) \,.
\end{equation}
Notice that if we have a solution $g$ of \eqref{coag4} we can get a solution
$\tilde g$
for   $w=1$ but with the same first moment $M_1$ as $g$ by 
setting
$\tilde g(x) = a^2 g(a x)$  with $a^{-1+2\lambda}=w$. Hence, we set in
the following without loss of generality $w=1$.
Furthermore,  if $g(x)$ is  a solution to \eqref{coag4}, then so is
\begin{equation}\label{coag5}
\hat g(x) = a^{1+2\lambda} g(a x) \qquad \mbox{ for } a >0\,
\end{equation}
with $M_1(\hat g) = a^{2 \lambda -1 } M_1(g)$.
The invariance \eqref{coag5} also suggests that a solution $g$ satisfies
\begin{equation}\label{coag6}
g(x) \sim h_{\lambda} x^{-(1+2 \lambda)} \qquad \mbox{ as } x \to 0 
\end{equation}
for a specific positive constant $h_{\lambda}$ that is determined by $K$ (see below).
This behaviour has  been predicted as well by physicists \cite{Le1,DE1}, but a rigorous
proof was still lacking. In \cite{EM} it has been established for kernels as in \eqref{kernel} and linear
combinations of those
that 
$g(x)x^{1+2\lambda+a} \in L^{\infty}(0,\infty)$ for any $a>0$ and
that $g(x)x^{1+2\lambda+a} \notin L^{\infty}(0,\infty)$ for any $a<0$. 
It is the main goal of this paper to improve this result. 
Let us also mention that for the diagonal kernel $K(x,y)=x^{-(1+2\lambda)} \delta(x{-}y)$
a self-similar solution with the expected power-law behavior has
been constructed in \cite{Le06}, but it is not known that every solution exhibits this behavior.

In order to proceed we have to switch to a weak formulation of \eqref{coag4}. Indeed,
the predicted singular behavior \eqref{coag6} implies that 
 both integrals on the right
hand side of \eqref{coag4} diverge. To avoid this difficulty, we 
 consider in the following a weak version of equation \eqref{coag4}.
Multiplying \eqref{coag4}
by $x$ and integrating from $x$ to $\infty$ we obtain
\begin{equation}\label{coag9}
x^2 g(x) = \int_0^x  \,dy\,g(y) \int_{x-y}^{\infty}\,dz\, y K(y,z) g(z) \,.
\end{equation}
 Indeed, the right hand side is just the mass flux at $x$. 
This weak formulation has also been essential in \cite{FL1} where the existence of a positive
fast decaying solution is established that satisfies \eqref{coag9} almost everywhere.
Later it has  been shown in \cite{CM} that 
any such solution is infinitely differentiable on $(0,\infty)$.

For the following we introduce $h$ via
\begin{equation}\label{coag8}
g(x) = x^{-(1+2\lambda)} h(x)\,
\end{equation}
such that \eqref{coag9}
becomes in terms of $h$
\begin{equation}\label{coag10}
h(x) = x^{2\lambda-1} \int_0^x\,dy\, y^{-2\lambda} h(y) \int_{x-y}^{\infty}\,dz\,
 K(y,z) z^{-(1+2\lambda)} h(z)\,.
\end{equation}

We see that \eqref{coag10} has the solution $h \equiv h_{\lambda}$, where
\[
h_{\lambda}^{-1}= \int_0^1 ds\,s^{-2\lambda} \int_{1-s}^{\infty} dt\,K(s,t) t^{-(1+2\lambda)}.
\]
Notice that due to the growth condition \eqref{kernel0b} with $\beta < 2\lambda$ this integral is well-defined.
This solution corresponds to a pure power-law solution of the 
original equation - a solution that due to its slow decay is considered unphysical.
After rescaling $h$ accordingly we consider from now on the equation
\begin{equation}\label{coag11}
h(x) = h_{\lambda}
 x^{2\lambda-1} \int_0^x \,dy\,y^{-2\lambda} h(y) \int_{x-y}^{\infty}\,dz\, K(y,z) z^{-(1+2 \lambda)} h(z)
\,
\end{equation}
that has the constant solution $h \equiv 1$.

Our main result establishes that   $h$ is uniformly bounded above and locally uniformly bounded from  below.
Thus we prove  the expected power-law behavior for small clusters of solutions to \eqref{coag9}.

\begin{theorem}\label{T.1}
Assume that $K$ satisfies \eqref{kernel0a}-\eqref{kernel0c} with $\alpha>0$ and $\lambda \in (0,1/2)$.
Let $h$
 be a positive  function that
satisfies  
\eqref{coag11} for almost  all $x \in (0,\infty)$. Then there exist positive constants $M=M(\lambda,\alpha,k_0,K_0)$ and
$m=m(\lambda,\alpha,k_0,K_0)$ such that
\begin{equation}\label{T.upperbound}
\sup_{x \in (0,\infty)} h(x) \leq M
\end{equation}
and
\begin{equation}\label{T.lowerbound}
\liminf_{x \to 0} h(x) \geq m\,.
\end{equation}
\end{theorem}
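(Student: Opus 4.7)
The starting point is to rewrite \eqref{coag11} in scale-invariant form via the substitution $y=xs$, $z=xt$ and the homogeneity of $K$, obtaining
\begin{equation}\label{P.star}
h(x) = h_\lambda\int_0^1 s^{-2\lambda}h(xs)\,ds\int_{1-s}^{\infty} K(s,t)\,t^{-(1+2\lambda)}h(xt)\,dt.
\end{equation}
The normalization of $h_\lambda$ makes $d\tilde\mu := h_\lambda s^{-2\lambda} K(s,t)t^{-(1+2\lambda)}\chi_{\{t\geq 1-s\}}\,ds\,dt$ a probability measure on $(0,1)\times(0,\infty)$, so \eqref{P.star} expresses $h(x)$ as the $\tilde\mu$-expectation of $h(xS)h(xT)$. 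Under the growth hypothesis \eqref{kernel0b} with $\alpha,\beta\in(0,1/2)$ and $\alpha+\beta=2\lambda$, explicit computation gives $\tilde\mu(T\leq\delta)=O(\delta^{1-\alpha})+O(\delta^{1-\beta})$ and $\tilde\mu(T\geq T_{0})=O(T_{0}^{-\alpha})+O(T_{0}^{-\beta})$; both tails vanish precisely because $\alpha>0$, and this is the quantitative engine of the argument. As preparation, the weak formulation \eqref{coag9} together with mass conservation gives $x^{2}g(x)$ bounded, hence $h(x)\leq Cx^{2\lambda-1}\to 0$ as $x\to\infty$; combined with the regularity of $h$ on compact subsets of $(0,\infty)$, this reduces both \eqref{T.upperbound} and \eqref{T.lowerbound} to the regime $x\to 0^{+}$.

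For \eqref{T.upperbound} my plan is a bootstrap based on \eqref{P.star}. Setting $M_{-}(R):=\sup_{0<x\leq R}h(x)$, for $x\leq R$ one has $h(xs)\leq M_{-}(R)$ always, and the inner integral is split at $t=R/x$: on the inner piece $h(xt)\leq M_{-}(R)$, while on the outer piece $h(xt)\leq C(xt)^{2\lambda-1}$ from the decay at infinity. After carrying out the $t$-integration (the integrability at $\infty$ of $K(s,t)t^{-2}$ uses $\alpha,\beta<1$) and invoking the tail bounds on $\tilde\mu$ above, this yields a quadratic inequality of the form
\[
M_{-}(R)\;\leq\;\Theta\,M_{-}(R)^{2}+C_{1},\qquad 0<\Theta<1,
\]
uniformly in small $R$, from which $M_{-}(R)\leq M$ follows.

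For \eqref{T.lowerbound} I would first derive the symmetric assertion $\limsup_{x\to 0}h(x)\geq 1$ from \eqref{P.star}: the inequality $M_{-}(R)\leq M_{-}(R)[M_{-}(RT_{0})+M\,\tilde\mu(T>T_{0})]$ gives $1\leq M_{-}(RT_{0})+M\,\tilde\mu(T>T_{0})$, and letting $R\to 0$ followed by $T_{0}\to\infty$ produces the claim. Next I would restrict the double integral in \eqref{P.star} to the non-degeneracy region $(s,t)\in[3/4,1]\times[1/4,1]$, where $K\geq k_{0}$ and $t\geq 1-s$ is automatic, and exploit that the integrand separates there:
\[
h(x)\;\geq\;c_{*}\Bigl(\int_{3/4}^{1} s^{-2\lambda}h(xs)\,ds\Bigr)\Bigl(\int_{1/4}^{1} t^{-(1+2\lambda)}h(xt)\,dt\Bigr),\qquad c_{*}=h_{\lambda}k_{0}.
\]
Combined with \eqref{T.upperbound} and the $\limsup$ estimate, a covering argument using the continuity of $h$ prevents either average from collapsing: at any scale $x$ at which $h$ is close to $1$, both averages stay bounded below by universal constants, and this positivity is transported uniformly to a full neighborhood of $x=0$, giving $\liminf_{x\to 0}h(x)\geq m>0$.

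The principal obstacle is the upper bound bootstrap: naively bounding $h(xs)h(xt)\leq M_{-}(R)^{2}$ in \eqref{P.star} gives only the tautology $M_{-}(R)\leq M_{-}(R)^{2}$, so all the work lies in producing a strict gain $\Theta<1$. That gain requires the thinness of $\tilde\mu$ near $t=0$ and $t=\infty$ afforded by $\alpha>0$; in the complementary case $\alpha=0$ this thinness fails, $\Theta<1$ cannot be produced, and correspondingly the optimal decay exponent at zero is strictly smaller than $1+2\lambda$, in line with the dichotomy highlighted in the introduction.
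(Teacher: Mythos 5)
Your plan has genuine gaps in both halves, and the central one is in the upper bound. First, the reduction "mass conservation gives $x^{2}g(x)$ bounded, hence $h(x)\leq Cx^{2\lambda-1}\to0$" is not available: the theorem assumes only that $h$ is positive and satisfies \eqref{coag11} a.e., with no finite-mass or decay hypothesis, and the constant solution $h\equiv1$ already falsifies that intermediate claim (even for finite-mass profiles, bounding the flux in \eqref{coag9} needs moment bounds you have not established). Second, the bootstrap itself does not close. The quantity $M_{-}(R)=\sup_{0<x\leq R}h(x)$ is not known to be finite a priori, and an inequality $u\leq\Theta u^{2}+C_{1}$ with a fixed $\Theta<1$ is satisfied by all large $u$, so it yields no bound without a continuity-in-$R$ argument, a smallness of $\Theta$ relative to $C_{1}$, and a starting scale where $M_{-}$ is already controlled — none of which you have. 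Worse, the claimed gain $\Theta<1$ fails: for $x\ll R$ the "inner piece" $\{t\leq R/x\}$ exhausts the probability measure $\tilde\mu$, so the supremum over $x\leq R$ of its $\tilde\mu$-mass is $1$, and the outer piece contributes a factor of order $R^{2\lambda-1}$, not a constant. The paper escapes the tautology $M\leq M^{2}$ by a different mechanism you are missing: integrating \eqref{coag11} over a dyadic interval $(aR,R)$ and using the diagonal lower bound \eqref{kernel0c}, the quadratic structure is used in the \emph{favorable} direction, $\tav_{aR}^{R}h\geq c\,(\tav h)(\tav h)$ over comparable blocks, which immediately gives $\sup_{R}\tav_{R/2}^{R}h\leq C$ with no a priori information; the pointwise bound \eqref{T.upperbound} then follows from dyadic summation of these averages (your tail computations for $\tilde\mu$ play the role of those summations, but only after the average bound is in hand).

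For the lower bound, the step "$\limsup_{x\to0}h\geq1$" is fine and is also noted in the paper, but the passage from it to $\liminf_{x\to0}h\geq m$ is exactly the hard part, and "a covering argument using the continuity of $h$" does not address it. Your separated inequality bounds $h(x)$ from below by values of $h$ at \emph{smaller} arguments $xs,xt$, so it propagates positivity from smaller scales to larger ones; it does not exclude a sequence of deep, narrow dips $x_{n}\to0$ coexisting with $\limsup h\geq1$ and the upper bound $M$, and continuity of $h$ is not among the hypotheses. The paper's proof of \eqref{T.lowerbound} requires two quantitative lemmas in the logarithmic variable $X=\log x$: a growth estimate $H(X)\leq2H(X_{0})e^{D(X-X_{0})}$ for $X>X_{0}$, and a stability lemma exploiting the exact relation $G(Y{-}\eps,Z{-}\eps)=e^{-(1-2\lambda)\eps}G(Y,Z)$ (homogeneity $2\lambda<1$) together with a four-way splitting of the $z$-integral, showing that if $H$ is very small at some point and controlled on a preceding interval of length $L$, then it contracts and stays small for all larger $X$. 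If $\liminf_{x\to0}h=0$ this propagation forces $h\equiv0$, a contradiction with positivity. Some quantitative substitute for this forward-propagation-of-smallness mechanism is indispensable, and your sketch contains no candidate for it.
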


\ignore{
\begin{remark}
We do not need for our proof that $h$ is continuous on $(0,\infty)$. We just assume this to avoid to have
add the restriction 'for almost all' $x$ in the formulas. In principle we expect any weak solution to the
coagulation equation to be smooth. However, the result \cite{CM} strictly speaking does not cover the full
class of kernels considered in this paper.
\end{remark}
}
\begin{remark}
Notice that one can easily deduce from \eqref{coag10} that $\limsup_{x \to 0} h(x) \geq 1$. Of
course, we 
 expect that $\lim_{x \to 0} h(x)=1$ for any solution of \eqref{coag11} but presently a proof
is still lacking.
One main difficulty in the analysis
of \eqref{coag11} is the fact that if one linearises the coagulation operator around the expected power
law behavior one obtains in the case $\alpha=0$ terms of different homogeneity, whereas in the case $\alpha>0$ 
the homogeneity remains the same. As  also pointed out in \cite{CM,FL2} this is the main reason why the methods
developed for the case $\alpha=0$ do not apply to the case $\alpha>0$.
Furthermore, formal computations as well as numerical simulations \cite{FilL1,Lee} suggest for the case $\alpha>0$ that the
next order behavior of $h$ 
is oscillatory. This indicates that a rigorous proof of  continuity of $h$
at $x=0$ might be inherently difficult.
\end{remark}

\section{The upper bound}

In this section we will prove \eqref{T.upperbound}. The first step is to prove a uniform
bound on averages of $h$.

\begin{lemma}\label{L.average}
There exists a constant $C=C(\lambda,\alpha,k_0)$  such that
\begin{equation}\label{av1}
\sup_{R>0}  \av_{R/2}^{R}\,dx\, h(x) \leq C\,.
\end{equation}
\end{lemma}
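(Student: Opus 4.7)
My plan is to turn \eqref{coag11} against itself: restrict the double integral on the right-hand side to a region where every factor admits a clean two-sided estimate in terms of $R$, producing a pointwise lower bound for $h(x)$ on a subinterval of $(R/2,R)$ that involves the very averages we want to control. Then a self-referential inequality and the positivity of $h$ will give the bound essentially for free.

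Concretely, fix $R>0$ and consider $x\in(R/2,3R/4)$. In \eqref{coag11} I would restrict the integration to
$y\in(R/4,R/2)$ and $z\in(R/2,R)$. The constraint $z>x-y$ is automatic in this range since $x-y<R/2\leq z$. On these intervals the elementary monotonicity of $y\mapsto y^{-2\lambda}$ and $z\mapsto z^{-(1+2\lambda)}$ gives $y^{-2\lambda}\geq c\,R^{-2\lambda}$ and $z^{-(1+2\lambda)}\geq c\,R^{-(1+2\lambda)}$, while $x^{2\lambda-1}\geq c\,R^{2\lambda-1}$. The non-degeneracy \eqref{kernel0c} combined with the homogeneity \eqref{kernel0a} yields $K(y,z)\geq k_0 R^{2\lambda}$, since both $y/R$ and $z/R$ lie in $[1/4,1]$. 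Multiplying these factors, the powers of $R$ telescope to $R^{-2}$ and one obtains
\begin{equation*}
h(x)\ \geq\ C(\lambda,k_0)\,R^{-2}\,\Bigl(\int_{R/4}^{R/2} h(y)\,dy\Bigr)\Bigl(\int_{R/2}^{R} h(z)\,dz\Bigr)\qquad \text{for a.e. }x\in(R/2,3R/4).
\end{equation*}

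Now integrate this inequality in $x$ over $(R/2,3R/4)$, an interval of length $R/4$. Writing $A_1:=\int_{R/4}^{R/2} h$ and $A_2:=\int_{R/2}^{R} h$, the left-hand side is bounded above by $A_2$, so
\begin{equation*}
A_2\ \geq\ \frac{C}{4}\,R^{-1}\,A_1\,A_2.
\end{equation*}
Since $h$ is positive, $A_2>0$, and division by $A_2$ yields $A_1\leq 4R/C$. This is precisely $\fint_{R/4}^{R/2} h\leq 16/C$, and renaming $R\mapsto 2R$ delivers \eqref{av1} with an explicit constant depending only on $\lambda$, $k_0$ (and the fixed $h_\lambda$, which depends on $\lambda$ and $K_0$).

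The only place the argument could conceivably stumble is the bookkeeping of powers of $R$ — the lemma is true precisely because the dimensional exponents $2\lambda-1$, $-2\lambda$, $2\lambda$ and $-(1+2\lambda)$ sum to $-2$, matching the measure $dy\,dz$ on the product of two intervals of length $R$, so that the double average ends up scale invariant. Note also that the upper growth condition \eqref{kernel0b} plays no role here; only the lower bound \eqref{kernel0c} on comparable-size collisions is used, consistent with the paper's central heuristic that, for $\alpha>0$, the small-cluster distribution is governed by collisions of particles of comparable size.
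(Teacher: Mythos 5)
Your proof is correct and follows essentially the same route as the paper: both lower-bound the quadratic term in \eqref{coag11} by restricting to particles of size comparable to $R$, where homogeneity and \eqref{kernel0c} give $K(y,z)\geq k_0R^{2\lambda}$, then integrate in $x$ over a subinterval of length of order $R$ and cancel one factor of $\int h$ from the resulting self-referential inequality; the paper performs the restriction by swapping orders of integration and discarding pieces, while your pointwise restriction to the box $y\in(R/4,R/2)$, $z\in(R/2,R)$ is a mild streamlining of the same idea. The only step worth making explicit is that dividing by $A_2$ also requires $A_2<\infty$, which your pointwise inequality supplies since $h(x)<\infty$ for a.e.\ $x$ (the paper's division by the average of $h$ over $(R/4,R)$ involves the same implicit step).
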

\begin{proof}
We  integrate \eqref{coag11}  over $(aR,R)$, where $a \in [1/4,1]$ will be chosen later, to find
\begin{equation}\label{av10}
\int_{aR}^R\,dx\, h(x) \geq h_{\lambda} R^{2\lambda -1} \int_{aR}^R
\,dx\,
\int_0^x\,dy\, y^{-2\lambda} h(y) \int_{x-y}^{\infty}\,dz\,  K(y,z) z^{-(1+2 \lambda)} h(z)
\,.
\end{equation}
Now we  first switch the order of integration and  then drop
one of the resulting integrals respectively, keeping in mind
that the integrands are always nonnegative. This gives 
\[
\int_{aR}^{R} \,dx \int_0^x \,dy = \int_0^{aR}\,dy \int_{aR}^R \,dx + \int_{aR}^R \,dy \int_{y}^R
\,dx \geq \int_{aR}^R \,dy \int_{y}^R\,dx
\]
and
\[
\int_y^R\,dx \int_{x-y}^\infty \,dz = \int_0^{R-y} \,dz \int_y^{z+y} \,dx + \int_{R-y}^\infty
\,dz \int_y^R\,dx \geq \int_{R-y}^\infty
\,dz \int_y^R\,dx \,.
\]
 Using the last two inequalities
 in \eqref{av10} as well as the nonnegativity of the integrand,
the homogeneity of the kernel and \eqref{kernel0c},
we find, for any $b \in (a,1)$, that
\begin{equation}\label{av11}
\begin{split}
\int_{aR}^R&\,dx\, h(x)\\
&\geq  h_{\lambda} R^{2\lambda -1} \int_{aR}^R \,dy 
\int_{R{-}y}^{\infty} \,dz\,
(R{-}y) K(y,z) y^{-2\lambda} h(y)  z^{-(1+2 \lambda)} h(z)\\
& \geq C  R^{-2} \int_{aR}^{bR} \,dy \,(R-y) h(y) \int_{R{-}y}^R \,dz \frac{K(y,z)}{R^{2\lambda}}\,h(z)\\
& \geq C k_0 (1{-}b) R^{-1} \int_{aR}^{bR} \,dy\, h(y) \int_{R(1{-}b)}^R \,dz\, h(z)\,.
\end{split}
\end{equation}
Equation \eqref{av11} implies
\[
\av_{aR}^R \,dx\, h(x) \geq C (1{-}b) (b{-}a) \av_{aR}^{bR} \,dy\, h(y)\, \av_{R(1{-}b)}^R \,dz\, h(z)\,.
\]
Choosing now $a=1/4$ and $b=3/4$ implies $\sup_{R>0}  \tav_{R/4}^{3R/4}\,dx\, h(x) \leq C_0$, which in turn
implies the statement of the lemma.
\end{proof}

Lemma \ref{L.average} is crucial in the proof of the upper bound \eqref{T.upperbound}.
\begin{lemma}\label{L.upperbound}
There exists $M=M(\lambda,\alpha,k_0,K_0)$ such that 
\[
\sup_{x \in (0,\infty)} h(x) \leq M.
\]
\end{lemma}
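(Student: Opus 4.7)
The plan is to turn the averaged control of Lemma \ref{L.average} into a pointwise bound by estimating the right-hand side of \eqref{coag11} via dyadic decomposition. First I would bound the inner $z$-integral: by \eqref{kernel0b} its integrand is dominated by $K_0(y^\alpha z^{-1-\alpha} + y^\beta z^{-1-\beta})h(z)$; for each exponent $\gamma \in \{\alpha,\beta\}$ I decompose $z\in[x{-}y,\infty)$ into the dyadic annuli $[2^j(x{-}y),2^{j+1}(x{-}y)]$, apply Lemma \ref{L.average} on each, and sum the geometric series $\sum_j 2^{-j\gamma}$, which converges precisely because $\gamma \geq \alpha > 0$. This is the only place where the hypothesis $\alpha>0$ is really used, and it yields
$$
\int_{x-y}^\infty K(y,z)\,z^{-1-2\lambda}\,h(z)\,dz \;\leq\; C\bigl(y^\alpha(x-y)^{-\alpha} + y^\beta(x-y)^{-\beta}\bigr).
$$

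Substituting into \eqref{coag11} and splitting the $y$-integral at $x/2$, the piece $y\in(0,x/2)$ is harmless: on it $(x-y)^{-\gamma}$ is comparable to $x^{-\gamma}$, a further dyadic decomposition combined with Lemma \ref{L.average} gives $\int_0^{x/2} h(y)y^{-\gamma'}\,dy \leq C x^{1-\gamma'}$ for the relevant exponents, and collecting powers of $x$ (which cancel because $\alpha+\beta=2\lambda$) yields an $O(1)$ contribution. On $(x/2,x)$ the weight $y^{-2\lambda}$ is of size $x^{-2\lambda}$, and after the substitution $y=x(1-w)$ one is left with an inequality of the schematic form
$$
h(x) \;\leq\; A + B\int_0^{1/2} h\bigl(x(1-w)\bigr)\,w^{-\beta}\,dw,
$$
with constants $A,B$ depending only on $\lambda,\alpha,k_0,K_0$.

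The main obstacle is to close this inequality to a uniform pointwise bound. The constant $B$ is not a priori small and the integrand involves $h$ on arguments comparable to $x$ itself, so Lemma \ref{L.average} by itself is too weak to replace $h$ by its supremum in the integrand. My plan is to exploit the smoothing character of the fractional-integration operator $h \mapsto \int_0^{1/2} h(x(1-w))w^{-\beta}\,dw$ (which, since $\beta<1$, gains integrability) and to bootstrap: starting from $\|h\|_{L^1([x/2,x])}\leq Cx$ supplied by Lemma \ref{L.average}, iterate the above inequality in $L^p$ to successively upgrade $h$ to $L^p$ for larger $p$; once $p>1/(1-\beta)$ is reached, H\"older's inequality applied to the right-hand side together with Lemma \ref{L.average} gives the desired pointwise bound. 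An alternative that avoids $L^p$ machinery is to plug \eqref{coag11} into itself once, producing two copies of $h$ inside a multiple integral whose inner averages can then both be controlled by Lemma \ref{L.average} directly. Either route leads to $h(x)\leq M(\lambda,\alpha,k_0,K_0)$.
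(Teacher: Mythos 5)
Your proposal is correct, but it parts ways with the paper at the decisive step, so it is worth spelling out the comparison. Your first two steps coincide with the paper's proof: the bound $\int_{x-y}^\infty K(y,z)z^{-1-2\lambda}h(z)\,dz \leq C\big(y^\alpha(x-y)^{-\alpha}+y^\beta(x-y)^{-\beta}\big)$ is obtained there too by dyadic decomposition plus Lemma \ref{L.average} (this is the content of \eqref{upb2} and \eqref{upb1}), and the region $y<x/2$ is treated identically (\eqref{upb3}, \eqref{upb5}), with the powers of $x$ cancelling via $\alpha+\beta=2\lambda$. Where you diverge is the near-diagonal region $y\in(x/2,x)$: the paper does not bootstrap at all, but in \eqref{upb6} decomposes $(x/2,x)$ into the shells $x-2^{-n}x<y<x-2^{-(n+1)}x$ and bounds $\int h$ over each shell by $C2^{-n}x$, invoking \eqref{av1}, which closes the estimate in a single pass. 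Your instinct that Lemma \ref{L.average} ``by itself is too weak'' to control $h$ near $y=x$ is exactly the point of divergence, and it is a defensible instinct: those shells have length $2^{-(n+1)}x$ but sit at distance $\approx x$ from the origin, so they are not of the form $[R/2,R]$, and \eqref{av1} applied literally only yields $\int_{\mathrm{shell}}h\leq Cx$; the per-shell bound used in \eqref{upb6} amounts to average control of $h$ at all small scales near $x$, i.e.\ something very close to the conclusion itself, so the paper's one-line appeal to \eqref{av1} there is doing more work than the lemma states. Your Young--H\"older bootstrap supplies precisely this missing control: in logarithmic variables your inequality reads $H\leq A+B\,k*H$ with a compactly supported kernel $k(s)\sim s^{-\beta}$, so $k\in L^r$ for every $r<1/\beta$; one application of Young's inequality upgrades the uniform local $L^1$ bound of Lemma \ref{L.average} to a uniform local $L^r$ bound with $1/(1-\beta)<r<1/\beta$ (such $r$ exists since $\beta<1/2$), and H\"older then gives the pointwise bound -- so the iteration you envisage terminates after two steps and no open-ended induction on $p$ is needed. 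The price of your route is that the equation is used twice; the gain is that \eqref{av1} is only ever applied on intervals comparable to their distance from the origin, where it genuinely applies. (Your alternative of substituting \eqref{coag11} into itself once is the same idea in disguise and would need the same care with the new inner diagonal; the $L^p$ route is the cleaner one to write down.)
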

\begin{proof}
Recall that the equation for $h$ is given in \eqref{coag11}.
We split the integral $\int_{x-y}^{\infty} \,dz$ into the parts
$\int_{x-y}^x\,dz $ and $\int_x^{\infty}\,dz$. The second one is the easier one and we start with
an estimate for it. In the following all constants will in general depend on the parameters
$\lambda,\alpha, k_0$ and $K_0$. 

We first claim that there
 exists a constant $C$ such that
\begin{equation}\label{upb2}
\int_{x}^{\infty}\,dz\, z^{-(1+2\lambda)} K(y,z) h(z)
\leq C x^{-2\lambda + \beta} y^{\alpha}\,.
\end{equation}
Since $y \leq z$ we have $K(y,z) \leq C y^{\alpha} z^{\beta}$. Furthermore, it follows
from \eqref{av1} that
\[
\begin{split}
\int_{x}^{\infty}\,dz\, z^{-(1+2\lambda)+\beta} h(z)
&= \sum_{n=0}^{\infty} \int_{2^nx}^{2^{n+1}x} \,dz\,  z^{-(1+2\lambda)+\beta} h(z)\\
& \leq  \sum_{n=0}^{\infty} \big( 2^nx\big)^{-(1+2\lambda)+\beta} \int_{2^nx}^{2^{n+1}x} \,dz\,   h(z)\\
&\leq C  \sum_{n=0}^{\infty} \big( 2^nx\big)^{-2\lambda+\beta} \\
& \leq C x^{-2\lambda + \beta}\,
\end{split}
\]
and this implies \eqref{upb2}.

Furthermore, we have  that
\begin{equation}\label{upb3}
\int_0^{x} \,dy\, y^{-2\lambda + \alpha} h(y)
\leq C x^{1-2\lambda + \alpha}\,.
\end{equation}
Indeed, we can estimate, using \eqref{av1},
\[
\begin{split}
\int_0^{x} \,dy\, y^{-2\lambda + \alpha} h(y)& = \sum_{n=0}^{\infty}
\int_{2^{-(n+1)}x}^{2^{-n} x} \,dy\, y^{-2\lambda + \alpha} h(y)\\
& \leq  \sum_{n=0}^{\infty}
\Big( 2^{-(n+1)}  x  \Big)^{-2\lambda + \alpha}
\int_{2^{-(n+1)}x}^{2^{-n} x} \,dy\, h(y)\\
& \leq C \sum_{n=0}^{\infty} 2^{-(n+1)(1-2\lambda + \alpha)} {x}^{1-2\lambda + \alpha}\\
&= C x^{1-2\lambda + \alpha}\,.
\end{split}
\]

 Combining now (\ref{upb2}) and (\ref{upb3}) we find
\begin{equation}\label{upb10}
x^{2\lambda-1} \int_0^x \,dy\, y^{-2\lambda} h(y) \int_{x}^{\infty} \,dz\, z^{-(1+2\lambda)} K(y,z) h(z) \leq C\,.
\end{equation}

To estimate the integrals $\int_0^x \int_{x{-}y}^x \cdots$ we just use the estimate \eqref{kernel0b} for $K$.
In the following we show how to estimate the term coming from $y^{\alpha} z^{\beta}$. The estimate of the second
term follows analogously.

We claim that there exists a constant $C$ such that 
\begin{equation}
\label{upb1}
\int_{\xmy}^x \,dz\, z^{-(1+2\lambda)+\beta} h(z) \leq C \Big( (\xmy)^{-2\lambda + \beta}
+ x^{-2\lambda+\beta}\Big)\,.
\end{equation}
In fact, given $x$ and $\xmy$ we define $n_0 \in \N$ such that
$2^{-(n_0+1)} x \leq \xmy \leq 2^{-n_0} x$ and split
\begin{equation}\label{upb1b}
\begin{split}
\int_{\xmy}^x \,dz\, z^{-(1+2\lambda)+\beta} h(z) &
= \int_{\xmy}^{2^{-n_0} x} \cdots + \sum_{n=0}^{n_0-1} \int_{2^{-(n+1)}x}^{2^{-n}x}
\cdots \\
& \leq (\xmy)^{-(1+2\lambda)+\beta} \int_{\xmy}^{2^{-n_0} x}\,dz\, h(z) \\
&\quad \,+\, 
\sum_{n=0}^{n_0-1} \big( 2^{-(n+1)} x\big)^{-(1+2\lambda)+\beta} \int_{2^{-(n+1)}x}^{2^{-n}x}
\,dz\,h(z)\,.
\end{split}
\end{equation}
Due to \eqref{av1} and the definition of $n_0$ 
we have
\[
\int_{\xmy}^{2^{-n_0}x} \,dz\,h(z) \leq C 2^{-(n_0+1)}x \leq C (\xmy)\,.
\]
Using  
 \eqref{av1} also in the second term on the right hand side of \eqref{upb1b}
we find
\[
\begin{split}
\int_{\xmy}^x \,dz\, z^{-(1+2\lambda)+\beta} h(z) &
\leq C (\xmy)^{-2\lambda+\beta}  \,+\, C \sum_{n=0}^{n_0-1} \big( 2^{-(n+1)} x\big)^{-2\lambda + \beta}\\
& \leq  C \Big((\xmy)^{-2\lambda+\beta}  \,+\,  x^{-2\lambda+\beta}\Big)\,,
\end{split}
\]
which proves \eqref{upb1}.

Now
\begin{equation}\label{upb4}
\begin{split}
\int_0^x &\,dy\,y^{-2\lambda + \alpha} h(y) \Big( (\xmy)^{-2\lambda+\beta} + x^{-2\lambda+ \beta}\Big) 
\\&
\leq C \int_{x/2}^x \,dy\, y^{-2\lambda + \alpha} (\xmy)^{-2\lambda+\beta} h(y)
\,+\, C x^{-2\lambda + \beta} \int_0^{x/2} \,dy\,y^{-2\lambda + \alpha} h(y)
\,.
\end{split}
\end{equation}
By \eqref{upb3} and $\alpha+\beta=2\lambda$  we have
\begin{equation}\label{upb5}
   x^{-2\lambda + \beta} \int_0^{x/2} \,dy\,y^{-2\lambda + \alpha} h(y)
\leq C x^{1-2\lambda}\,.
\end{equation}
Finally, similarly as before,
\begin{equation}\label{upb6}
\begin{split}
\int_{x/2}^x \,dy\,& y^{-2\lambda + \alpha} (\xmy)^{-2\lambda+\beta} h(y)
\\
&\leq C x^{-2\lambda + \alpha} \int_{x/2}^x \,dy\,(\xmy)^{-2\lambda+\beta} h(y)
\\
& \leq C x^{-2\lambda + \alpha}\sum_{n=1}^{\infty} 
\int_{x-2^{-n}x}^{x- 2^{-(n+1)}x} \,dy\,(\xmy)^{-2\lambda + \beta} h(y)\\
&\leq C x^{-2\lambda + \alpha}\sum_{n=1}^{\infty}
\Big( 2^{-n} x\Big)^{-2\lambda + \beta} \int_{x-2^{-n}x}^{x- 2^{-(n+1)}x} \,dy\, h(y)\\
& \leq C x^{1-2\lambda } \sum_{n=1}^{\infty}
\Big(2^{-n}\Big)^{1-2\lambda+\beta} \\
& \leq C x^{1-2\lambda } \,.
\end{split}
\end{equation}

Thus, estimates \eqref{upb1} and \eqref{upb6} 
imply
\begin{equation}\label{upb11}
x^{2\lambda-1} \int_0^x \,dy\, y^{-2\lambda} h(y) \int_{\xmy}^{x} \,dz z^{-(1+2\lambda)} K(y,z) h(z) \leq C,
\end{equation}
which together with \eqref{upb10}  finishes the proof of the upper bound.
\end{proof}

\section{The lower bound}

For the proof of a lower bound on $\liminf_{x \to 0} h(x)$ it is convenient
 to introduce the change of variables
\begin{equation}\label{changevariables}
x=e^{\x}, \quad y=e^{\y}, \quad z=e^{\z} \mbox{ and } \quad \h(\x) = h(x)\,.
\end{equation}
Then \eqref{coag11} becomes
\begin{equation}\label{Heq}
\begin{split}
&\h(\x)\\
&= h_{\lambda}  \int_{-\infty}^0 \,d\y\,e^{(1-2\lambda )\y} 
\int_{\log(1-e^{\y})}^\infty \,d\z\,
e^{-2\lambda\z} K(e^{\x},e^{\y}) \h(\x{+}\y) \h(\x{+}\z)
\\
&= \int_{{\Omega}_{0}}\,d\y\,d\z\, G(\y,\z) \h(\x{+}\y) \h(\x{+}\z)\\
&= \int_{\Omega_{\x}} \,d\y\,d\z\,G(\y{-}\x,\z{-}\x)\h(\y)\h(\z)\,,
\end{split}
\end{equation}
with
\begin{equation}\label{omegaxdef}
\Omega_{\x} = \Big\{ -\infty < \y<\x\; ; \; \z-\x > \log \big( 1- e^{\y{-}\x}\big) \Big\}
\end{equation}
and
\begin{equation}\label{gdef}
G(\y,\z) = h_{\lambda} e^{(1-2\lambda)\y}e^{-2\lambda \z} K(e^{\y},e^{\z})\,.
\end{equation}
For further use we notice that the smoothness and homogeneity of the kernel $K$ imply
that $G(\y{-}\eps,\z{-}\eps)$ is strictly decreasing in $\eps$. Indeed, this follows from
\[
\begin{split}
\frac{d}{d\eps} G(\y{-}\eps,\z{-}\eps)&= -\partial_{\y} G(\y{-}\eps,\z{-}\eps) - \partial_{\z}
G(\y{-}\eps,\z{-}\eps)\\
& =G(\y{-}\eps,\z{-}\eps)\Big( -(1{-}2\lambda)
\\
& \qquad
  - e^{\y{-}\eps}\frac{\partial_y K}{K}(\y{-}\eps,\z{-}\eps) + 2\lambda - e^{\z{-}\eps} \frac{\partial_z K}{K}(\y{-}\eps,\z{-}\eps)\Big)\\
&= - G(\y{-}\eps,\z{-}\eps)(1{-}2\lambda) <0\,
\end{split}
\]
and, more precisely, this implies
\begin{equation}\label{gdec}
G(\y{-}\eps,\z{-}\eps) = G(\y,\z) e^{-(1{-}2\lambda) \eps}\,.
\end{equation}

\subsection{A growth estimate}

We first prove an estimate that shows that $\h$ can change at most exponentially.

\begin{lemma}\label{L.growth}
There exists a positive constant $D=D(\lambda,\alpha,k_0,K_0)$ such that for any $\x_0\in \R$ we have
\begin{equation}\label{growth}
\h(\x) \leq 2 \h(\x_0) e^{D(\x{-}\x_0)} \qquad \mbox{ for all } \x>\x_0\,.
\end{equation}
\end{lemma}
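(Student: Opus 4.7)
The plan is to compare the equation \eqref{Heq} for $\h(\x)$ and $\h(\x_0)$ when $\x>\x_0$. Since the map $\x \mapsto \x + \log(1-e^{\y-\x})$ is strictly increasing (its derivative equals $1/(1-e^{\y-\x})>0$), one has $\Omega_{\x} \cap \{\y<\x_0\} \subset \Omega_{\x_0}$, and consequently
\[
\Omega_{\x} \setminus \Omega_{\x_0} = \{(\y,\z)\,:\,\x_0 \leq \y < \x,\ \z > \x + \log(1-e^{\y-\x})\}.
\]
Using the transformation law \eqref{gdec} on the intersection $\Omega_{\x} \cap \Omega_{\x_0}$, namely $G(\y-\x,\z-\x) = e^{-(1-2\lambda)(\x-\x_0)}G(\y-\x_0,\z-\x_0)$, the contribution of this intersection to $\h(\x)$ is bounded by $e^{-(1-2\lambda)(\x-\x_0)}\h(\x_0) \leq \h(\x_0)$, so only the thin strip $\Omega_{\x} \setminus \Omega_{\x_0}$ remains to be controlled.

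On this strip I would bound $\h(\z) \leq M$ by Lemma~\ref{L.upperbound} and $\h(\y) \leq \sup_{[\x_0,\x]}\h$, since $\y \in [\x_0,\x)$. Changing back to original variables $y = e^{\y-\x}$, $z = e^{\z-\x}$ and invoking \eqref{kernel0b}, the remaining integral of $G$ takes the shape
\[
\int_{e^{\x_0-\x}}^{1} y^{-2\lambda} \int_{1-y}^{\infty} z^{-(1+2\lambda)} K(y,z)\,dz\,dy,
\]
whose inner integral is of order $(1-y)^{-\beta}$ near $y=1$ (this is where $\beta<1$ is essential); the outer integral then yields the bound $C(\x-\x_0)^{1-\beta}$ for $\x-\x_0$ small. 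Combining these pieces produces the key inequality
\[
\h(\x) \leq \h(\x_0) + CM\,(\x-\x_0)^{1-\beta} \sup_{t \in [\x_0,\x]} \h(t).
\]

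Taking the supremum of this inequality over $\x \in [\x_0, \x_0+\delta]$ and choosing $\delta = \delta_0$ so small that $CM\delta_0^{1-\beta} \leq 1/2$ allows me to absorb the supremum into the left-hand side, giving $\sup_{[\x_0,\x_0+\delta_0]}\h \leq 2\h(\x_0)$. Iterating this on consecutive intervals of length $\delta_0$ produces $\h(\x) \leq 2^{k+1}\h(\x_0)$ on $[\x_0+k\delta_0, \x_0+(k+1)\delta_0]$, which with the choice $D := (\log 2)/\delta_0$ is exactly the claimed bound $\h(\x) \leq 2\h(\x_0) e^{D(\x-\x_0)}$. The main technical obstacle I expect is the singular-integral estimate $\int_{\Omega_{\x} \setminus \Omega_{\x_0}} G \leq C(\x-\x_0)^{1-\beta}$: the integrand blows up along the boundary $y+z=1$ of $\Omega_{\x}$, and it is exactly the Hölder exponent $1-\beta$ coming from the sub-unit homogeneity of $K$ that makes the absorption step above possible with a finite $\delta_0$.
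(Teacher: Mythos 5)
Your proposal is correct and follows essentially the same route as the paper: split $\Omega_{\x}$ into the part already controlled by $\h$ at the earlier point (using the shift identity \eqref{gdec}, which is exactly the paper's monotonicity of $G$) plus the thin strip $\x_0\leq\y<\x$, bound the strip contribution by $CM\,(\x-\x_0)^{1+\alpha-2\lambda}\sup\h$ (your exponent $1-\beta$ is the same since $\alpha+\beta=2\lambda$), absorb the supremum for a small step size, and iterate to get the exponential bound. The only cosmetic difference is that the paper phrases the absorption via $\h(\x+\tilde\eps)$ for $\tilde\eps\in(0,\eps)$ and leaves the iteration implicit, whereas you make it explicit; no gap.
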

\begin{proof}
For positive $\eps>0$ we consider $\h(\x{+}\eps)$. For that purpose we write
\[
\begin{split}
\Omega_{\x{+}\eps}& = \big( \Omega_{\x+\eps} \cap \{ \y \leq \x\}\big) 
\cup \big ( \Omega_{\x{+}\eps} \cap  \{ \x < \y < \x{+} \eps\} \big)\\
& \subset \Omega_{\x{+}\eps} \cup \big ( \Omega_{\x{+}\eps} \cap  \{ \x < \y < \x+ \eps\} \big)
=: \Omega_{\x{+}\eps}  \cup \, {\tilde \Omega}_{\eps}\,.
\end{split}
\]
In the domain $\Omega_{\x}$ we have that 
$G(\y{-}\x,\z{-}\x)$
is decreasing in $\x$. Hence 
\[
\begin{split}
&\h(\x{+}\eps) \leq \int_{\Omega_{\x}}\,d\y\,d\z\, G(\y{-}(\x{+}\eps),\z{-}(\x{+}\eps)) \h(\y)\h(\z)
\\
&
\qquad\qquad \,+\, \int_{{\tilde \Omega}_{\eps}} \,d\y\,d\z\, G(\y{-}(\x{+}\eps),\z{-}(\x{+}\eps)) \h(\y)\h(\z)
\\
& \leq \h(\x) + \int_{-\eps < \y <0} \int_{\z>\log(1{-}e^{\y})}  \,d\y\,d\z\,G(\y,\z) 
\h(\y{+}\x{+}\eps) \h(\z{+}\x{+}\eps)\\
& \leq \h(\x) + M \sup_{\y\in(\x,\x+\eps)}\h(\y)  
\int_{-\eps < \y <0} \int_{\z>\log(1{-}e^{\y})}  \,d\y\,d\z\,G(\y,\z)
\,,
\end{split}
\]
where $M$ is the uniform bound from Lemma \ref{L.upperbound}.
Recall, that \eqref{kernel0b} implies for $G$ that
\begin{equation}\label{Gbound}
G(\y,\z) \leq h_{\lambda} K_0 \Big [e^{(1-2\lambda + \alpha)\y} e^{(-2\lambda+\beta)\z}
+ e^{(1-2\lambda + \beta)\y} e^{(-2\lambda+\alpha)\z}\Big]\,.
\end{equation}
We find
\[
\begin{split}
\int_{-\eps < \y <0}\,d\y\, \int_{\z>\log(1{-}e^{\y})}\,d\z 
e^{(\alpha-2\lambda)\z} e^{(1-2\lambda + \beta)\y} 
&\leq C \int_{-\eps<\y<0}\,d\y\, \y^{\alpha-2\lambda} 
\\
&\leq C \eps^{1-2\lambda+\alpha}
\end{split}
\]
and a similar term from the first part of the right hand side of \eqref{Gbound}. Since we assume that $\alpha \leq \beta$ this
gives together with the previous estimate
\[
\h(\x{+}\eps) \leq \h(\x) + C \sup_{\y\in(\x,\x+\eps)}\h(\y)  
\eps^{1-2\lambda+\alpha} 
\]
and hence

\[
\h(\x+\eps) \leq  \h(\x) + \tfrac 1 2 \sup_{\y\in(\x,\x+\eps)}\h(\y) 
\]
for sufficiently small $\eps$. Since we can obtain analogously the estimate
$H(\x+\tilde \eps) \leq  H(\x) + \frac 1 2 \sup_{\y\in(\x,\x+\eps)}\h(\y) 
$ for all $\tilde \eps \in (0,\eps)$, and thus, taking the supremum over
$\tilde \eps$, we find
\[
\sup_{\y\in(\x,\x+\eps)}\h(\y)  
\leq 2 {\h(\x)}\,.
\]
This implies the statement of the lemma.

\end{proof}

\subsection{A stability result}

Our lower bound will be a consequence of the following lemma.

\begin{lemma}\label{L.continuity}
Let $\eps \in (0,\eps_0]$ and let $\eps_0=\eps_0(\lambda,\alpha,k_0,K_0)$ be sufficiently small. Then
 there exist $L=L(\eps,\lambda,\alpha,k_0,K_0)$ and $\delta_0=\delta_0(\eps,\lambda,\alpha,k_0,K_0)$
such that the following holds true for all $\delta \in (0,\delta_0]$ and $\x_0 \in \R$.

If $\h(\x) \leq 4 \h(\x_0)$ in $[\x_0{-}L,\x_0]$ and $\h(\x_0) \leq \delta$, then
$\h(\x_0+\eps) \leq \big( 1- (1-2\lambda)\eps/4\big) \h(\x_0)$. Furthermore
$\h(\x) \leq 4 \delta$ for all $\x>\x_0$.
\end{lemma}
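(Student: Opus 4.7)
The approach exploits the scaling identity \eqref{gdec}. Applying \eqref{Heq} at $\x_0+\eps$ and using $G(\y-\x_0-\eps,\z-\x_0-\eps)=e^{-(1-2\lambda)\eps}G(\y-\x_0,\z-\x_0)$ on the integrand gives
\[
\h(\x_0+\eps)=e^{-(1-2\lambda)\eps}\int_{\Omega_{\x_0+\eps}}G(\y{-}\x_0,\z{-}\x_0)\,\h(\y)\h(\z)\,d\y\,d\z.
\]
Comparing this with \eqref{Heq} at $\x_0$ and writing $B_1=\Omega_{\x_0}\setminus\Omega_{\x_0+\eps}$ and $B_2=\Omega_{\x_0+\eps}\setminus\Omega_{\x_0}$, I obtain the master identity
\[
\h(\x_0+\eps)=e^{-(1-2\lambda)\eps}\Big[\h(\x_0)-\mathcal{I}_{B_1}+\mathcal{I}_{B_2}\Big],\qquad \mathcal{I}_B:=\int_B G(\y{-}\x_0,\z{-}\x_0)\h(\y)\h(\z)\,d\y\,d\z.
\]
Since $e^{-(1-2\lambda)\eps}\le 1-(1-2\lambda)\eps/2$ for $\eps$ small and $\mathcal{I}_{B_1}\ge 0$, the first conclusion reduces to showing $\mathcal{I}_{B_2}\le\tfrac14(1-2\lambda)\eps\,\h(\x_0)$.

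A short analysis of \eqref{omegaxdef} shows $B_2\subset\{\x_0\le\y<\x_0+\eps,\ \z>\x_0+\eps+\log(1-e^{\y-\x_0-\eps})\}$. Since $\y\in[\x_0,\x_0+\eps)$, Lemma~\ref{L.growth} gives $\h(\y)\le 2\h(\x_0)e^{D\eps}\le 4\h(\x_0)$ for $\eps$ small. The main obstacle is that a direct computation from \eqref{Gbound} yields $\int_{B_2}G\le C\eps^{1-\beta}$---the blow-up of $(1-e^{\y-\x_0-\eps})^{-\alpha}$ near $\y=\x_0+\eps$ is the limiting factor---which, since $\beta>0$, is asymptotically larger than the target $\eps$. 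Hence the crude bound $\h(\z)\le M$ is insufficient, and the remedy is a two-scale split of $B_2$ at a threshold $\z=\x_0+A$. On $\{\z>\x_0+A\}$ I use only $\h(\z)\le M$ from Lemma~\ref{L.upperbound} together with the exponential decay $e^{-\alpha(\z-\x_0)}$ of $G$ to get $\int_{B_2\cap\{\z>\x_0+A\}}G\le C\eps e^{-\alpha A}$, so this piece contributes at most $CM\h(\x_0)\eps e^{-\alpha A}$, which is absorbed into $\tfrac18(1-2\lambda)\eps\h(\x_0)$ by choosing $A$ large in terms of $M$, $\alpha$, $\lambda$. On $\{\z\le\x_0+A\}$, taking $L\ge A$ plus a small buffer (to handle the thin $\y$-slice where the lower bound on $\z$ can dip below $\x_0-L$, whose measure is exponentially small in $L$), the hypothesis $\h\le 4\h(\x_0)$ on $[\x_0-L,\x_0]$ combined with Lemma~\ref{L.growth} on $[\x_0,\x_0+A]$ gives $\h(\z)\le C\h(\x_0)e^{DA}$; the resulting contribution is $\le C\h(\x_0)^2 e^{2DA}\eps^{1-\beta}\le C\delta e^{2DA}\eps^{1-\beta}\h(\x_0)$, absorbed by taking $\delta_0$ small in terms of $\eps$, $A$, $D$.

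For the global bound $\h\le 4\delta$ on $(\x_0,\infty)$, the plan is to bootstrap the first conclusion on the grid $\x_n=\x_0+n\eps$, using Lemma~\ref{L.growth} to interpolate across each $[\x_n,\x_{n+1}]$ via $\h\le 2\h(\x_n)e^{D\eps}\le 4\h(\x_n)\le 4\delta$. The bookkeeping subtlety is the pointwise hypothesis $\h\le 4\h(\x_n)$ on $[\x_n-L,\x_n]$ needed to reapply the first part at $\x_n$: since $\h(\x_n)$ may drop well below $\delta$, the inductive bound $\h\le 4\delta$ on the left interval need not imply this tighter pointwise bound. The natural workaround is a contradiction argument on $\x^*:=\inf\{\x>\x_0:\h(\x)>4\delta\}$: just to the left of $\x^*$ the hypotheses of the first part still hold (after possibly shrinking $\delta_0$ by a factor of four so that $4\delta\le\delta_0$ remains admissible), and the first conclusion forces a strict decrease of $\h$ across $\x^*$, contradicting the definition of $\x^*$.
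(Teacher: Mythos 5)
Your proof of the one-step decay estimate is essentially the paper's own argument: you use the scaling identity \eqref{gdec} to pull the factor $e^{-(1-2\lambda)\eps}$ out of the integral over the common domain (the paper simply drops your $\mathcal{I}_{B_1}$ by positivity instead of writing the exact identity), and you estimate the extra strip $B_2$ (the paper's $\tilde\Omega_{\eps}$) by splitting the $\z$-range into a far tail controlled by the decay of $G$ and the global bound $M$, a window around $\x_0$ where $\h$ is controlled by the left-window hypothesis together with Lemma~\ref{L.growth} and absorbed using $\h(\x_0)\le\delta$, and a thin $\y$-slice of width $O(e^{-L})$ near $\x_0+\eps$ where the lower limit of $\z$ escapes the window and one integrates the singularity $\big(1-e^{\y-(\x_0+\eps)}\big)^{-\beta}$; this is exactly the paper's \eqref{hest5}--\eqref{hest9}, with your two cutoffs $A,L$ in place of the paper's single $L$. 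One bookkeeping point you should make explicit: the thin-slice contribution carries the factor $\h(\x_0)$ but no factor $\eps$, so your ``small buffer'' must in fact be taken large depending on $\eps$ (as the paper does by requiring $Ce^{-\sigma L}\le\tfrac18(1-2\lambda)\eps$); this is admissible since $L=L(\eps,\dots)$, but it is not conveyed by ``measure exponentially small in $L$'' alone.

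For the propagation $\h\le 4\delta$ on $(\x_0,\infty)$ the paper iterates directly in steps of $\eps$, whereas you run a first-crossing argument at $\x^*$; both routes work, but your write-up leans on a step that is not literally available: near $\x^*$ (just as at the paper's grid points $\x_n$) one only knows $\h\le 4\delta$ on the left window, not the relative bound $\h\le 4\h(\x_1)$ demanded by the lemma as stated, and shrinking $\delta_0$ by a factor of four does not supply it, since $\h(\x_1)$ may be far below $\delta$ --- this is precisely the objection you yourself raised against the naive induction. The repair is the observation (implicit in the paper, and in your own estimates) that the proof of the one-step decay never uses the relative bound: it only uses $\h\le 4\delta$ on $[\x_0-L,\x_0]$ together with $\h(\x_0)\le\delta$, since on the window one may replace $\h(\z)\le C\h(\x_0)e^{DA}$ by $\h(\z)\le Ce^{DA}\delta$ and the absorption by choosing $\delta_0$ small is unchanged. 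Restating the one-step lemma in this weaker-hypothesis form --- which is exactly the form recorded in Lemma~\ref{C.1} --- makes both your contradiction argument and the paper's induction go through; without that restatement your final paragraph, read literally, re-imports the gap it was designed to avoid.
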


\begin{proof}
As in the previous lemma we have, exploiting in addition \eqref{gdec},  that
\begin{equation}\label{hest0}
\begin{split}
\h(\x_0{+}\eps)& \leq \int_{\Omega_{\x}}\,d\y\,d\z\, G(\y{-}(\x_0{+}\eps),\z{-}(\x_0{+}\eps)) \h(\y)\h(\z)
\\
&
\qquad\,+\, \int_{{\tilde \Omega}_{\eps}} \,d\y\,d\z\, G(\y{-}(\x_0{+}\eps),\z{-}(\x_0{+}\eps)) \h(\y)\h(\z)
\\
&\leq e^{-(1-2\lambda)\eps} \h(\x) +\, \int_{{\tilde \Omega}_{\eps}} \,d\y\,d\z\, G(\y{-}(\x_0{+}\eps),
\z{-}(\x_0{+}\eps)) \h(\y)\h(\z)
\,.
\end{split}
\end{equation}
We recall that Lemma \ref{L.growth} implies that
\begin{equation}\label{hest1}
\h(\x) \leq 2 e^{DL} \h(\x_0) \qquad \mbox{ for } \x \in (\x_0,\x_0+L)
\end{equation}
and in particular for sufficiently small $\eps$
\begin{equation}\label{hest2}
\h(\y) \leq 4 \h(\x_0) \qquad \mbox{ for } \y \in (\x_0,\x_0+\eps)\,.
\end{equation}
Thus, 
\eqref{hest0} implies
\begin{equation}\label{hest3}
\begin{split}
\h(\x_0{+}\eps) \leq& e^{-(1-2\lambda)\eps} \h(\x_0)\\
& + C \h(\x_0) \int_{{\tilde \Omega}_{\eps}} \,d\y\,d\z\, G(\y{-}(\x_0{+}\eps),
\z{-}(\x_0{+}\eps)) \h(\z)\,.
\end{split}
\end{equation}

We recall that ${\tilde \Omega}_{\eps} \subset \{ (\y,\z)\,:\, \x_0 <\y \leq \x_0+\eps\,, \,
\z \geq \x_1 \}$ 
with
$\x_1:= \x_0{+}\eps +  \log
( 1{-}e^{\y{-}(\x_0{+}\eps)})$. Thus
 from (\ref{Gbound})
\begin{equation}\label{hest4}
\begin{split}
\int_{{\tilde \Omega}_{\eps}} &\,d\y\,d\z\, G(\y{-}(\x_0{+}\eps),
\z{-}(\x_0{+}\eps)) \h(\z) \\
&\leq C \int_{\x_0}^{\x_0+\eps}\,d\y \int_{\x_1}^{\infty}
\,d\z\,\big[
e^{(\beta - 2 \lambda)(\z{-}(\x_0{+}\eps))}+ e^{(\alpha - 2\lambda) (\z-(\x_0+\eps))} \big]
\h(\z)\,.
\end{split}
\end{equation}
Now we   split
\[
\begin{split}
(\x_1, \infty )
& =  (\x_1,\max(\x_0{-}L,\x_1)) \cup  (\max(\x_0{-}L,\x_1),\x_0) \\
& \quad \cup (\x_0,\x_0+L) \cup
(\x_0+L,\infty)
\end{split}
\]
We will see that the integral over the third interval will be controlled by \eqref{hest1},
the last by the decay of the kernel, the second by the smallness assumption  for $\h$ 
 on $[\x_0{-}L,\x_0]$
and the first again by the property of the kernel.

Indeed, 
 using \eqref{hest1} as well as $\beta<2\lambda$, we find
\begin{equation}\label{hest5}
\int_{\x_0}^{\x_0+L} \,d\z\,\big[
e^{(\beta - 2 \lambda)(\z{-}(\x_0{+}\eps))}+ e^{(\alpha - 2\lambda) (\z-(\x_0+\eps))} \big]
\h(\z)\,
\leq C  e^{DL}\, \h(\x_0)\,.
\end{equation}
Furthermore, recalling $\alpha \leq \beta<2\lambda$, we have
\begin{equation}\label{hest6}
\int_{\x_0+L}^{\infty} 
 \,d\z\,\big[
e^{(\beta - 2 \lambda)(\z{-}(\x_0{+}\eps))}+ e^{(\alpha - 2\lambda) (\z-(\x_0+\eps))} \big]
\h(\z)\,
\leq C e^{(\beta-2\lambda)L}\,.
\end{equation}
The assumptions in the Lemma imply that
\begin{equation}\label{hest7}
\begin{split}
\int_{\max(\x_0{-}L,\x_1)}^{\x_0} \,d\z\,\big[
e^{(\beta - 2 \lambda)(\z{-}(\x_0{+}\eps))}&+ e^{(\alpha - 2\lambda) (\z-(\x_0+\eps))} \big]
\h(\z)\,\\
& \leq C \h(\x_0) e^{(2\lambda - \alpha)L}\,.
\end{split}
\end{equation}
Finally, we consider the interval $(\x_1,\max(\x_0-L,\x_1))$. This is only nonempty 
if $\y \geq \x_0+\eps + \log \big( 1 - e^{-(L+\eps)}\big)$. Using
the global bound on $\h$ from Lemma \ref{L.upperbound}, we find
\begin{equation}\label{hest8}
\begin{split}
\int_{\x_1}^{\max(\x_0-L,\x_1)}& \,d\z\,
e^{(\alpha - 2\lambda) (\z-(\x_0+\eps))} 
\h(\z)\, \\
&\leq C \int_{\log(1-e^{\y-(\x_0+\eps)})}^{-(L+\eps)} \,d\z\, e^{(\alpha - 2\lambda)\z}
\\
&\leq C \exp \big( (\alpha - 2 \lambda) \log (1-e^{\y-(\x_0+\eps)})\big)\\
& \leq C \big( 1 - e^{\y-(\x_0+\eps)})^{\alpha - 2 \lambda}
\end{split}
\end{equation}
and hence
\begin{equation}\label{hest9}
\begin{split}
\int_{\x_0{+}\eps+\log(1-e^{-(L+\eps)})}^{\x_0{+}\eps} &\,d\y\, \int_{\x_1}^{\max(\x_0-L,\x_1)}
\,d\z\,
e^{(\alpha - 2\lambda) (\z-(\x_0+\eps))}
\h(\z)\, \\
&\leq C \int_{\log(1-e^{-(L+\eps)})}^0 \,d\y\,\big( 1-e^{\y-(\x_0{+}\eps)}\big)^{\alpha-2\lambda}
\\
&\leq C \int_0^{e^{-(L{+}\eps)}}\,d\z\,\z^{\alpha-2\lambda}\\
& \leq C e^{- (1+\alpha - 2\lambda)L}\,.
\end{split}
\end{equation}
Thus we deduce from \eqref{hest4}-\eqref{hest9} that
\begin{equation}\label{hest10}
\begin{split}
\int_{{\tilde \Omega}_{\eps}} &\,d\y\,d\z\, G(\y{-}(\x_0{+}\eps),
\z{-}(\x_0{+}\eps)) \h(\z) \\
&\leq C \Big( \eps \h(\x_0) \big( e^{DL} + e^{(2\lambda-\alpha)L}\big) + \eps
e^{-(2\lambda-\beta)L} + e^{-(1+\alpha-2\lambda)L} \Big)\,.
\end{split}
\end{equation}
Plugging \eqref{hest10} into \eqref{hest3} implies 
\begin{equation}\label{hest11}
\h(\x_0{+}\eps) \leq \h(\x_0) \Big( 1 - \frac{1-2\lambda}{2}\eps + C \big( \delta \eps e^{\gamma}L
+ e^{-\sigma L}\big) \Big)
\end{equation}
with $\gamma=\max(D,2\lambda-\alpha)$ and $\sigma = \min(1+\alpha-2\lambda, 2\lambda - \beta)
= 2 \lambda - \beta$.

In all these computations we have assumed that $\eps$ is sufficiently small. Given now such an $\eps$ we 
choose $L$ sufficiently large such that $Ce^{-\sigma L} \leq \frac 1 8 (1-2\lambda)\eps$ and
then $\delta$ sufficiently small such that $C\delta \eps e^{\gamma L} \leq \frac 1 8 (1-2\lambda)\eps$ .
Then
\begin{equation}\label{hest12}
\h(\x_0{+}\eps) \leq \Big( 1 - \tfrac 1 4 (1-2\lambda)\eps \Big) \h(\x_0) \leq \h(\x_0) \leq \delta\,.
\end{equation}
Furthermore, due to \eqref{hest2} we also have $\h(\x) \leq 4 \delta $ in $(\x_0,\x_0{+}\eps)$.
Hence, the assumptions of the Lemma are satisfied for $\x_0{+}\eps$ as well. This implies the
desired result.
\end{proof}

\subsection{Consequences}

We can now easily derive the following consequences of \linebreak Lemma \ref{L.continuity}. 
\begin{lemma} \label{C.1}
There exist positive constants $L=L(\lambda,\alpha,k_0,K_0)$ and $\delta_0 =$ \linebreak $ \delta_0(\lambda,\alpha,k_0,K_0)$ such that  
for all $\delta \in (0,\delta_0]$ the following holds true.
If $\h(\x) \leq 4 \delta$ in an interval $[\x_0,\x_0+L]$ and $\h(\x_0+L)\leq \delta$
then $\h(\x)\leq 4 \delta $ for all $\x\geq\x_0+L$.
\end{lemma}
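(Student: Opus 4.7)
The plan is to deduce this as an immediate corollary of Lemma \ref{L.continuity}. Set $\x_1 := \x_0 + L$. The hypotheses of the statement supply $\h(\x_1) \leq \delta$ together with $\h(\x) \leq 4\delta$ on $[\x_1 - L, \x_1] = [\x_0, \x_0 + L]$, which are, modulo a cosmetic point noted below, exactly the assumptions needed to invoke Lemma \ref{L.continuity} at the point $\x_1$. The conclusion of that lemma gives $\h(\x) \leq 4\delta$ for all $\x > \x_1$, and combined with the hypothesis already in hand on $[\x_0, \x_0 + L]$, this yields $\h(\x) \leq 4\delta$ for all $\x \geq \x_0 + L$, as required.

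The constants $L$ and $\delta_0$ in the statement are taken to be those furnished by Lemma \ref{L.continuity} for some fixed admissible $\eps \in (0,\eps_0]$ (say $\eps = \eps_0$). The minor bookkeeping point is that Lemma \ref{L.continuity} is phrased with the hypothesis $\h(\x) \leq 4\h(\x_0)$ on $[\x_0 - L, \x_0]$, whereas in our setting we only know $\h(\x) \leq 4\delta$ on $[\x_1 - L, \x_1]$, which is a priori weaker since $\h(\x_1)$ could be strictly smaller than $\delta$. However, this bound enters the proof of Lemma \ref{L.continuity} only via the estimate \eqref{hest7}: replacing the factor $\h(\x_0)$ there by $\delta$ yields instead a contribution of the form $C \delta \eps e^{(2\lambda - \alpha) L}$, which is absorbed into the term $\delta \eps e^{\gamma L}$ of \eqref{hest11} by precisely the same choice of large $L$ and small $\delta_0$. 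Consequently the proof of Lemma \ref{L.continuity} adapts verbatim.

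I do not anticipate any substantive obstacle beyond this cosmetic adjustment. The real work—showing that one application of the coagulation operator shrinks $\h$ at the base point while keeping $\h \leq 4\delta$ on the immediately following short interval, and then iterating this indefinitely—was already carried out in Lemma \ref{L.continuity}. The statement C.1 is essentially just a repackaging of that propagation property into a form directly usable for the forthcoming lower-bound argument on $\liminf_{x\to 0} h(x)$.
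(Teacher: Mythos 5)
Your proposal is correct and follows the paper's intended route: the paper states Lemma \ref{C.1} without proof as an immediate consequence of Lemma \ref{L.continuity}, and your application of that lemma at the base point $\x_1=\x_0+L$ is exactly that deduction. Your observation that the hypothesis $\h\leq 4\h(\x_0)$ on $[\x_0-L,\x_0]$ enters the proof of Lemma \ref{L.continuity} only through \eqref{hest7}, so that replacing it by $\h\leq 4\delta$ leaves \eqref{hest11} and the choice of $L$ and $\delta_0$ unchanged (and in fact makes the iteration giving $\h\leq 4\delta$ for all $\x>\x_1$ close cleanly), correctly handles the one point the paper glosses over.
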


\begin{lemma}\label{C.lowerbound}
We have
\begin{equation}\label{hest20}
\liminf_{\x \to -\infty} \h(\x) >0\,.
\end{equation}
\end{lemma}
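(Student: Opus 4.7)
The plan is to argue by contradiction, combining Lemma \ref{L.growth} with Lemma \ref{C.1} to spread a single small value of $\h$ over all of $\R$, and then use the quadratic structure of \eqref{Heq} together with the normalization $\int_{\Omega_0}G=1$ to force $\h\equiv 0$. Suppose $\liminf_{\x\to-\infty}\h(\x)=0$. Fix $\delta\in(0,\delta_0]$ to be specified, where $\delta_0$ and $L$ are the constants from Lemma \ref{C.1}, and let $D$ be the growth constant from Lemma \ref{L.growth}.

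Under the contradiction hypothesis, I can pick $\tilde\x$ arbitrarily far in $-\infty$ with $\h(\tilde\x)\leq\frac{\delta}{2}e^{-DL}$. Lemma \ref{L.growth} then yields
\[
\h(\x)\leq 2\h(\tilde\x)\,e^{D(\x-\tilde\x)}\leq \delta \qquad\text{for every }\x\in[\tilde\x,\tilde\x+L],
\]
so $\h\leq 4\delta$ throughout the interval $[\tilde\x,\tilde\x+L]$ and $\h(\tilde\x+L)\leq\delta$. Lemma \ref{C.1} therefore applies and gives $\h(\x)\leq 4\delta$ for every $\x\geq\tilde\x+L$. Since $\tilde\x$ may be chosen as negative as I like, this inequality in fact holds for every $\x\in\R$.

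At this stage I invoke \eqref{Heq}. Because $\h\equiv 1$ solves the equation, the very definition of $h_\lambda$ forces $\int_{\Omega_0}G(\y,\z)\,d\y\,d\z=1$, and a shift of variables gives $\int_{\Omega_\x}G(\y-\x,\z-\x)\,d\y\,d\z=1$ for every $\x$. Combined with $\h\leq 4\delta$ on $\R$ this produces
\[
\h(\x)=\int_{\Omega_\x}G(\y-\x,\z-\x)\h(\y)\h(\z)\,d\y\,d\z\leq (4\delta)^{2}\quad\text{for every }\x\in\R.
\]
Iterating, $\h(\x)\leq (4\delta)^{2^n}$ on $\R$ for every $n\geq 0$; choosing $\delta<1/4$ then forces $\h\equiv 0$, contradicting positivity of $\h$ (equivalently, the remark giving $\limsup_{\x\to-\infty}\h(\x)\geq 1$). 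This proves $\liminf_{\x\to-\infty}\h(\x)>0$.

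The only delicate point is the first paragraph: Lemma \ref{C.1} demands smallness on a whole interval of length $L$, whereas the contradiction hypothesis only gives smallness at isolated points $\tilde\x$. The growth bound of Lemma \ref{L.growth} is exactly the tool that upgrades the pointwise smallness $\h(\tilde\x)\leq\frac{\delta}{2}e^{-DL}$ into the uniform smallness $\h\leq\delta$ on $[\tilde\x,\tilde\x+L]$ needed to feed into Lemma \ref{C.1}. Once that bridge is crossed, the rest of the argument is soft.
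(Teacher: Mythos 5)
Your proof is correct, and the core mechanism is the same as the paper's: argue by contradiction, use Lemma \ref{L.growth} to upgrade pointwise smallness at points $\tilde\x\to-\infty$ into smallness on an interval of length $L$, and then invoke Lemma \ref{C.1} to propagate the bound to all $\x\geq\tilde\x+L$, hence to all of $\R$. The only divergence is the ending: the paper simply takes sequences $\x_n\to-\infty$, $\delta_n\to 0$ and lets $n\to\infty$ to force $\h\equiv 0$, whereas you fix one $\delta<1/4$ and instead exploit the normalization $\int_{\Omega_0}G\,d\y\,d\z=1$ (valid, since $\h\equiv 1$ solves \eqref{Heq} by the definition of $h_\lambda$) to run a quadratic bootstrap $\h\leq(4\delta)^{2^n}$; both conclusions are sound, though the iteration is an extra step you could avoid by noting that your argument already works for every $\delta\in(0,\delta_0]$.
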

\begin{proof}
Assume that \eqref{hest20} is not satisfied. Then there exist sequences $(\x_n)$ and $(\delta_n)$ 
with $\x_n \to - \infty$ and $\delta_n \to 0$ as $n \to \infty$, such that $\h(\x_n) \leq \delta_n$. 
By Lemma \ref{L.growth} we have $\h(\x) \leq 2 e^{DL}\delta_n$ in $[\x_n,\x_n+L]$. Then, by 
Lemma \ref{C.1}, we have $\h(\x)\leq { 8} e^{DL} \delta_n$ for all $\x\geq \x_n+L$. Thus, 
$\h \equiv 0$ which gives a contradiction.
\end{proof}

\bibliographystyle{amsplain}

\end{document}